\newcommand{\subj}[1]{\par\noindent{\bf Mathematics Subject Classification 2010: }#1.}
\newcommand{\keyw}[1]{\par\noindent{\bf Keywords: }#1.}
\theoremstyle{definition}
\newtheorem{definition}{Definition}
\newtheorem{theorem}{Theorem}
\def\a{\alpha}
\def\f{f^{(\a)}}
\def\DS{\displaystyle}
\begin{document}

\title{A remark on local fractional calculus}

\author{Ricardo Almeida$^1$\\
{\tt ricardo.almeida@ua.pt}
\and
Ma\l gorzata Guzowska$^{2}$\\
{\tt mguzowska@wneiz.pl}
\and
Tatiana Odzijewicz$^{3}$\\
{\tt tatiana.odzijewicz@sgh.waw.pl}
}

\date{$^1$Center for Research and Development in Mathematics and Applications\\
Department of Mathematics, University of Aveiro, 3810--193 Aveiro, Portugal\\
$^2$ Faculty of Economics and Management\\ University of Szczecin,
 71-101 Szczecin, Poland\\
$^3$Department of Mathematics and Mathematical Economics\\
Warsaw School of Economics,
 02-554 Warsaw, Poland}

\maketitle


\begin{abstract}

In this short note  we present a new general definition of local fractional derivative, that depends on an unknown kernel. For some appropriate choices of the kernel we obtain some known cases. We obtain a relation between this new concept and ordinary differentiation. Using such formula, most of the fundamental properties of the fractional derivative can be derived directly.

\end{abstract}

\subj{local fractional derivative, conformable derivative}

\keyw{26A33,26A24}


\section{Introduction}

Fractional calculus is a generalization of ordinary calculus, where derivatives and integrals of arbitrary real or complex order are defined. These fractional operators may model more efficiently certain real world phenomena, specially when the dynamics is affected by constraints inherent to the system. There exist several definitions for fractional derivatives and fractional integrals, like the Riemann--Liouville, Caputo, Hadamard, Riesz, Gr\"{u}nwald--Letnikov, Marchaud, etc. (see e.g., \cite{Kilbas,Podlubny} and references therein). Although most of them are already well-studied, some of the usual features for what concerns the differentiation of functions fails, like the Leibniz rule, the chain rule, the semigroup property, among others. As it was mentioned in \cite{Babakhani}, ``These definitions, however, are non-local in nature, which makes them unsuitable for investigating properties related to local scaling or fractional differentiability". Recently, the concept of local fractional derivative have gained relevance, namely because they kept some of the properties of ordinary derivatives, although they loss the memory condition inherent to the usual fractional derivatives \cite{Anderson,Chen,Katumgapola,Kolwankar,Kolwankar2}.
 One  question is what is the best local fractional derivative definition that we should consider, and the answer is not unique. Similarly to what happens to the classical definitions of fractional operators, the best choice depends on the experimental data that fits better in the theoretical model, and because of this we find already a vast number of definitions for local fractional derivatives.

\section{Local fractional derivative}

We present a definition of local fractional derivative using kernels.

\begin{definition} Let $k:[a,b]\to\mathbb R$ be a continuous nonnegative map  such that $k(t)\not=0$, whenever $t>a$. Given a function $f:[a,b]\to\mathbb R$ and $\a\in(0,1)$ a real, we say that $f$ is $\a$-differentiable at $t>a$, with respect to kernel $k$, if the limit
\begin{equation}\label{def}\f(t):=\lim_{\epsilon\to0}\frac{f(t+\epsilon k(t)^{1-\a})-f(t)}{\epsilon}\end{equation}
exists. The $\a$-derivative at $t=a$ is defined by
$$\f(a):=\lim_{t\to a^+}\f(t),$$
if the limit exists.
\end{definition}
Consider the limit $\a\to1^-$. In this case, for $t>a$, we obtain the classical definition for derivative of a function, $\f(t)=f'(t)$. Our definition is a more general concept, compared to others that we find in the literature. For example, taking $k(t)=t$ and $a=0$, we get the definition from \cite{Batarfi,Cenesiz,Hammad,Hesameddini,Khalil} (also called conformable fractional derivative); when $k(t)=t-a$, the one from \cite{Abdeljawad,Anderson2,Unal}; for $k(t)=t+1/\Gamma(\a)$, the definition in \cite{Atangana,Atangana2}.

The following result is trivial, and we omit the proof.

\begin{theorem} Let  $f:[a,b]\to\mathbb R$ be a differentiable function and $t>a$. Then, $f$ is $\alpha$-differentiable at $t$ and
$$\f(t)=k(t)^{1-\a}f'(t), \quad t>a.$$
Also, if $f'$ is continuous at $t=a$, then
$$\f(a)=k(a)^{1-\a}f'(a).$$
\end{theorem}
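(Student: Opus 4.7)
The plan is to reduce the $\alpha$-difference quotient at $t>a$ to an ordinary difference quotient by a change of variables, then handle the endpoint $t=a$ by continuity.

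First, I would fix $t>a$ and exploit the hypothesis $k(t)\neq 0$, together with $k\geq 0$, to conclude that $c:=k(t)^{1-\alpha}$ is a strictly positive constant independent of $\epsilon$. In the defining limit
\begin{equation*}
\f(t)=\lim_{\epsilon\to 0}\frac{f(t+\epsilon k(t)^{1-\alpha})-f(t)}{\epsilon},
\end{equation*}
I would substitute $h=\epsilon c$, so that $\epsilon=h/c$ and $h\to 0$ as $\epsilon\to 0$. This rewrites the quotient as
\begin{equation*}
\frac{f(t+h)-f(t)}{h/c}=c\cdot\frac{f(t+h)-f(t)}{h}.
\end{equation*}
Since $f$ is differentiable at $t$, passing to the limit $h\to 0$ yields $\f(t)=c\,f'(t)=k(t)^{1-\alpha}f'(t)$, which also shows that the $\alpha$-derivative exists at $t$.

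For the endpoint statement, I would apply the definition $\f(a):=\lim_{t\to a^+}\f(t)$ together with the formula just established on $(a,b]$. Since $k$ is continuous on $[a,b]$ and nonnegative, the map $t\mapsto k(t)^{1-\alpha}$ is continuous at $a$ (with value $k(a)^{1-\alpha}$, which is well-defined since $1-\alpha\in(0,1)$ and $k(a)\geq 0$), and by hypothesis $f'$ is continuous at $a$. Multiplying the two limits gives $\f(a)=k(a)^{1-\alpha}f'(a)$.

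There is no serious obstacle here; the only point that requires a brief verification is that the substitution $h=\epsilon k(t)^{1-\alpha}$ is legitimate, which is guaranteed precisely by the assumption $k(t)\neq 0$ for $t>a$. The rest is continuity of $k^{1-\alpha}$ at $a$ and the standard arithmetic of limits.
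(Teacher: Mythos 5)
Your argument is correct, and it is exactly the intended one: the paper omits the proof of this theorem as trivial, and your substitution $h=\epsilon k(t)^{1-\alpha}$ is the same change of variables the authors use (in the reverse direction) to prove the converse statement in their next theorem. The endpoint case via continuity of $t\mapsto k(t)^{1-\alpha}$ and of $f'$ at $a$ is also handled properly, including the degenerate possibility $k(a)=0$.
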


However, there exist $\a$-differentiable functions which are not differentiable in the usual sense. For example, consider the function $f(t)=\sqrt t$, with $t\geq0$. If we take the kernel $k(t)=t$, then $\f(t)=1/2 \, t^{1/2-\a}$. Thus, for $\a\in(0,1/2)$, $\f(0)=0$ and for $\a=1/2$, $\f(0)=1/2$. In general, if we consider the function  $f(t)=\sqrt[n]{t}$, with $t\geq0$ and $n\in\mathbb N\setminus \{1\}$, we have $\f(t)=1/n \, t^{1/n-\a}$ and so $\f(0)=0$ if $\a\in(0,1/n)$ and for $\a=1/n$, $\f(0)=1/n$.

\begin{theorem}  If $\f(t)$ exists for $t>a$, then $f$ is differentiable at $t$ and
$$f'(t)=k(t)^{\a-1} \f(t).$$
\end{theorem}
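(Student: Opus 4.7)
The plan is to reduce the ordinary difference quotient of $f$ at $t$ to the fractional one by a linear change of variable. Because $t>a$, the hypothesis on the kernel guarantees $k(t)>0$, so $k(t)^{1-\a}$ is a strictly positive real number. I will introduce the substitution $h=\epsilon\, k(t)^{1-\a}$, which is a bijective linear relation between $\epsilon$ and $h$ sending $0$ to $0$; in particular $\epsilon\to 0$ if and only if $h\to 0$.

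Next, I would rewrite the standard difference quotient of $f$ at $t$ by substituting $\epsilon=h\,k(t)^{\a-1}$:
\[
\frac{f(t+h)-f(t)}{h}=\frac{f\bigl(t+\epsilon\, k(t)^{1-\a}\bigr)-f(t)}{\epsilon\, k(t)^{1-\a}}=k(t)^{\a-1}\cdot\frac{f\bigl(t+\epsilon\, k(t)^{1-\a}\bigr)-f(t)}{\epsilon}.
\]
Since $\f(t)$ is assumed to exist, the right-hand quotient tends to $\f(t)$ as $\epsilon\to 0$. By the equivalence of the limits $\epsilon\to 0$ and $h\to 0$ noted above, and since $k(t)^{\a-1}$ is a nonzero constant with respect to the limiting variable, the left-hand side converges as $h\to 0$ to $k(t)^{\a-1}\f(t)$. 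This simultaneously establishes that $f$ is differentiable at $t$ and gives the claimed formula $f'(t)=k(t)^{\a-1}\f(t)$.

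There is really no serious obstacle here: the whole argument is a one-line change of variables in the difference quotient. The only point that requires a brief justification is the positivity of $k(t)$ for $t>a$, which is exactly the standing hypothesis on the kernel and which is what makes the substitution $h=\epsilon\,k(t)^{1-\a}$ a legitimate, invertible reparametrization of the limit. This also explains why the theorem is phrased only for $t>a$: at $t=a$ the kernel may vanish, and in that case $\f(a)$ is defined as a secondary limit rather than as a genuine difference quotient, so the same substitution argument does not apply.
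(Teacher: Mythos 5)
Your proof is correct and follows essentially the same route as the paper: the authors also pass from the ordinary difference quotient to the fractional one via the substitution $\delta=\epsilon\,k(t)^{1-\a}$, pulling out the constant $k(t)^{\a-1}$. Your version merely makes explicit the point the paper leaves implicit, namely that $k(t)>0$ for $t>a$ is what makes this change of variables an invertible reparametrization of the limit.
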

\begin{proof} It follows from
$$\begin{array}{ll}
f'(t)&=\DS \lim_{\delta\to0}\frac{f(t+\delta)-f(t)}{\delta}\\
&=\DS k(t)^{\a-1} \lim_{\epsilon\to0}\frac{f(t+\epsilon k(t)^{1-\a})-f(t)}{\epsilon}\\
&=\DS k(t)^{\a-1} \f(t).
\end{array}$$
\end{proof}

Of course we can not conclude anything at the initial point $t=a$, as was discussed before.

Combining the two previous results, we have the main result of our paper.

\begin{theorem}\label{MainT} A function  $f:[a,b]\to\mathbb R$ is $\a$-differentiable at $t>a$ if and only if it is differentiable at $t$. In that case, we have the relation
\begin{equation}\label{MainF}\f(t)=k(t)^{1-\a}f'(t), \quad t>a.\end{equation}
\end{theorem}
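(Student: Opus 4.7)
The statement is the obvious amalgamation of the two preceding theorems, so my plan is simply to combine them, taking care only to verify that the formula in \eqref{MainF} is genuinely equivalent to the identity that appears in Theorem~2.

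First I would address the implication ``differentiable $\Rightarrow$ $\a$-differentiable''. This is already exactly the content of the first theorem of the section: if $f$ is differentiable at $t>a$, then $\f(t)$ exists and equals $k(t)^{1-\a}f'(t)$, which is precisely \eqref{MainF}. No additional work is required here.

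For the converse, I would invoke Theorem~2, which yields both the existence of the ordinary derivative $f'(t)$ and the identity $f'(t)=k(t)^{\a-1}\f(t)$. To recast this in the form of \eqref{MainF}, I would multiply through by $k(t)^{1-\a}$. The hypothesis on the kernel, namely that $k(t)\neq 0$ for every $t>a$, makes $k(t)^{1-\a}$ well-defined and nonzero, so the two relations $f'(t)=k(t)^{\a-1}\f(t)$ and $\f(t)=k(t)^{1-\a}f'(t)$ are strictly equivalent at such points.

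The only point that requires any vigilance is precisely this use of $k(t)\neq 0$; without it, one could not freely pass between the two forms of the identity. Since the standing assumption on the kernel guarantees nonvanishing on $(a,b]$, there is no real obstacle, and the proof reduces to a one-line citation of the two earlier theorems.
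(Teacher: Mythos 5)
Your proposal is correct and matches the paper exactly: the paper offers no proof beyond the remark that the theorem follows by ``combining the two previous results,'' which is precisely your argument. Your added observation that the identity $f'(t)=k(t)^{\a-1}\f(t)$ from Theorem~2 converts to \eqref{MainF} because $k(t)\neq0$ for $t>a$ is a worthwhile explicit detail the paper leaves implicit.
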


\section{Conclusion}

In this short note we show that some of the existent notions about local fractional derivative are very close related to the usual derivative function. In fact, the $\alpha$-derivative of a function is equal to the first-order derivative, multiplied by a continuous function. Also, using formula \eqref{MainF}, most of the results concerning $\alpha$-differentiation can be deduced trivially from the ordinary ones. In the authors opinion, local fractional calculus is an interesting idea and deserves further research, but definitions like \eqref{def} are not the best ones and a different path should be followed.


\section*{Acknowledgments}

Research supported by Portuguese funds through the CIDMA - Center for Research and Development in Mathematics and Applications,
and the Portuguese Foundation for Science and Technology (FCT-Funda\c{c}\~ao para a Ci\^encia e a Tecnologia), within project UID/MAT/04106/2013 (R. Almeida) and by the Warsaw School of Economics grant KAE/S15/35/15 (T. Odzijewicz).



\begin{thebibliography}{99}

\bibitem{Abdeljawad}
T. Abdeljawad,
On conformable fractional calulus, preprint.

\bibitem{Anderson2}
D.R. Anderson and R.I. Avery,
Fractional-order boundary value problem with Sturm--Liouville boundary conditions, Electron. J. Differ. Equ., Volume 2015, 29,  1--10, 2015.

\bibitem{Anderson}
D.R. Anderson and D.J. Ulness,
Properties of the Katugampola fractional derivative with potential application in quantum mechanics, J. Math. Phys 56, 063502, 2015.

\bibitem{Atangana}
A. Atangana and E.F.D. Goufo,
Extension of Matched Asymptotic Method to Fractional Boundary Layers Problems, Mathematical Problems in Engineering, Volume 2014, 107535, 7 pages.

\bibitem{Atangana2}
A. Atangana and S.C.O. Noutchie,
Model of Break-Bone Fever via Beta-Derivatives, BioMed Research International, Volume 2014, 523159, 10 pages.

\bibitem{Babakhani}
A. Babakhani and V. Daftardar--Gejji,
On calculus of local fractional derivatives, J. Math. Anal. Appl. 270 (1), 66--79,  2002.

\bibitem{Batarfi}
H. Batarfi, J. Losada, J.J. Nieto and W. Shammakh,
Three-point boundary value problems for conformable fractional differential equations, Journal of function spaces, Volume 2015, 706383, 6 pages.

\bibitem{Cenesiz}
Y. \c{C}enesiz and A. Kurt,
The solution of time fractional heat equation with new fractional derivative definition, in Recent Advances in Applied Mathematics, Modelling and Simulation (eds N.E. Mastorakis, M. Demiralp, N. Mukhopadhyay and F. Mainardi) North Atlantic University Union, 195--198, 2014.

\bibitem{Chen}
Y. Chen, Y. Yan  and K. Zhang,
On the local fractional derivative,  J. Math. Anal. Appl. 362 (1), 17--33,  2010.

\bibitem{Hammad}
M. Abu Hammad and R. Khalil,
Legendre fractional differential equation and Legender fractional polynomials, Int. J. Appl. Math. Res. 3 (3), 214--219, 2014.

\bibitem{Hesameddini}
E. Hesameddini and E. Asadollahifard,
Numerical solution of multi-order fractional differential equations via the sinc collocation method, Iran. J. Numer. Anal. Optim. 5 (1), 37--48, 2015.

\bibitem{Katumgapola}
U. Katumgapola,
A new fractional derivative with classical properties, preprint.

\bibitem{Khalil}
R. Khalil, M. Al Horani, A. Yousef and M. Sababheh,
A new definition of fractional derivative, J. Comput. Appl. Math. 264. 65--70, 2014.

\bibitem{Kilbas}
A.A. Kilbas, H.M. Srivastava and J.J. Trujillo, Theory and Applications of Fractional Differential Equations.
North-Holland Mathematics Studies, 204. Elsevier Science B.V., Amsterdam, 2006.

\bibitem{Kolwankar}
K.M. Kolwankar and A.D. Gangal,
Fractional differentiability of nowhere differentiable functions and dimension, Chaos 6, 505--513, 1996.

\bibitem{Kolwankar2}
K.M. Kolwankar and A.D. Gangal,
H\"{o}lder exponents of irregular signals and local fractional derivatives, Pramana J. Phys. 48, 49--68, 1997.

\bibitem{Podlubny}
I. Podlubny,
Fractional differential equations, Mathematics in Science and Engineering, 198. Academic Press, Inc., San Diego, CA, 1999.

\bibitem{Unal}
E. \"{U}nal, A. G\v{o}kdogan and E. \c{C}elik,
Solutions around a regular $\a$ singular point of a sequential conformable fractional differential equation, preprint.


\end{thebibliography}
\end{document}